\numberwithin{equation}{section}
\newtheorem{theorem}{Theorem}[section]
\newtheorem{lemma}[theorem]{Lemma}
\newtheorem{proposition}[theorem]{Proposition}
\newtheorem{corollary}[theorem]{Corollary}
\newtheorem{definition}[theorem]{Definition}
\newtheorem{thm}{Theorem}
\theoremstyle{definition}
\begin{document}
\title[Zeros of polynomials of derivatives of zeta functions]{Zeros of polynomials of derivatives of zeta functions}
\author[T.~Nakamura]{Takashi Nakamura}
\address[T.~Nakamura]{Department of Liberal Arts, Faculty of Science and Technology, Tokyo University of Science, 2641 Yamazaki, Noda-shi, Chiba-ken, 278-8510, Japan}
\email{nakamuratakashi@rs.tus.ac.jp}
\urladdr{https://sites.google.com/site/takashinakamurazeta/}
\subjclass[2010]{Primary 11M26}
\keywords{hybrid universality, zeros of the derivatives of zeta functions}
\maketitle

\begin{abstract}
Let $P_s \in \mathcal{D}_s[X_0,X_1, \ldots,X_l]$ be a polynomial whose coefficients are the ring of all general Dirichlet series which converge absolutely in the half-plane $\Re (s) > 1/2$.
In the present paper, we show that the function $P_s(L(s), L^{(1)}(s),\ldots, L^{(l)}(s))$ has infinitely many zeros in the vertical strip $D:= \{ s \in {\mathbb{C}} : 1/2 < \Re (s) <1\}$ if $L(s)$ is hybridly universal and $P_s \in \mathcal{D}_s[X_0,X_1, \ldots,X_l]$ is a polynomial such that at least one of the degree of $X_1,\ldots,X_l$ is greater than zero. As a corollary, we prove that the function $(d^k / ds^k) P_s(L(s))$ with $k \in {\mathbb{N}}$  has infinitely many zeros in the strip $D$ when $L(s)$ is hybridly universal and $P_s \in \mathcal{D}_s[X]$ is a polynomial with degree greater than zero. The upper bounds for the numbers of zeros of $P_s(L(s), L^{(1)}(s),\ldots, L^{(l)}(s))$ and $(d^k / ds^k) P_s(L(s))$ are studied as well. 
\end{abstract}

\section{Introduction and the main results}

\subsection{Main results}
Let $D(s)$ be a general Dirichlet series of the form 
\begin{equation}\label{eq:gds}
D(s) := \sum_{n=1}^\infty a_n e^{-\lambda_n s}, \qquad a_n \in {\mathbb{C}}, \quad \lambda_n \in {\mathbb{R}} .
\end{equation}
Let us denote by $\mathcal{D}_s$ the ring of all general Dirichlet series defined as (\ref{eq:gds}) which are absolutely convergent in the half-plane $\Re (s) > 1/2$. Let ${\mathbb{N}}$ be the set of integers greater than $0$, and $P_s \in \mathcal{D}_s[X]$ be a polynomial whose coefficients are in $\mathcal{D}_s$. Then we have the following results which are proved in Section 2. Note that the definition of the hybrid universality is written in the next subsection. Some remarks and examples related to main results are given in Section 3.

\begin{theorem}\label{th:1}
Let $L(s)$ be hybridly universal and $P_s \in \mathcal{D}_s [X_0,X_1, \ldots,X_l]$ be a polynomial such that at least one of the degree of $X_1,\ldots,X_l$ is greater than zero. Then, for any $1/2 < \sigma_1 < \sigma_2 < 1$, there exists a constant $C_1>0$ such that for sufficiently large $T$, the function $P_s(L(s), L^{(1)}(s),\ldots, L^{(l)}(s))$ has more than $C_1T$ nontrivial zeros in the rectangle $\sigma_1 < \sigma < \sigma_2$, $0 < t <T$. 
\end{theorem}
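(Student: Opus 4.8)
The plan is to use hybrid universality to manufacture, for a positive proportion of vertical shifts $\tau\in[0,T]$, a genuine zero of $F(s):=P_s(L(s),L^{(1)}(s),\ldots,L^{(l)}(s))$ inside a small disk centred at $\sigma_0+i\tau$, and then to count these zeros by a separation argument. Fix $\sigma_0$ with $\sigma_1<\sigma_0<\sigma_2$ and a radius $r>0$ so small that $\overline B:=\{\,|s-\sigma_0|\le 2r\,\}$ lies in the strip; writing $P_s=\sum_{\mathbf j}c_{\mathbf j}(s)\,X_0^{j_0}\cdots X_l^{j_l}$ with $c_{\mathbf j}\in\mathcal D_s$, each coefficient $c_{\mathbf j}$ is holomorphic on $\overline B$.

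First I would set up the approximation. By the hybrid universality of $L$ (recalled in the next subsection), which provides joint approximation forcing both the closeness of $L(s+i\tau)$ to $g$ and the recurrence of the finitely many coefficient series $c_{\mathbf j}$, for every $\varepsilon>0$ the set of $\tau\in[0,T]$ for which simultaneously
\[
\max_{s\in\overline B}\bigl|L(s+i\tau)-g(s)\bigr|<\varepsilon
\quad\text{and}\quad
\max_{\mathbf j}\,\max_{s\in\overline B}\bigl|c_{\mathbf j}(s+i\tau)-c_{\mathbf j}(s)\bigr|<\varepsilon
\]
has lower density at least some $\delta=\delta(\varepsilon)>0$; here $g$ may be any function holomorphic and zero-free on $\overline B$. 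By Cauchy's integral formula the first bound upgrades to $\max_{|s-\sigma_0|\le r}|L^{(i)}(s+i\tau)-g^{(i)}(s)|\ll_{r,i}\varepsilon$ for $0\le i\le l$, and since $P$ is a polynomial with all relevant quantities bounded on the disk, it follows that $\bigl|F(s+i\tau)-\widetilde P(s)\bigr|\ll\varepsilon$ on $|s-\sigma_0|\le r$, where $\widetilde P(s):=P_s\bigl(g(s),g'(s),\ldots,g^{(l)}(s)\bigr)$.

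The heart of the matter is to choose the target $g$ so that $\widetilde P$ is not identically zero yet vanishes inside the disk; this is exactly where the hypothesis that some $X_i$ with $i\ge1$ occurs to positive degree enters. Let $\mathbf j^{\ast}$ be such a monomial, so that $\sum_{i\ge1}i\,j^{\ast}_i\ge1$ and $c_{\mathbf j^{\ast}}\not\equiv0$, and choose $\sigma_0$ in addition so that $c_{\mathbf j^{\ast}}(\sigma_0)\ne0$. Then $P_{\sigma_0}(X_0,\ldots,X_l)$ is a nonconstant polynomial that is not of the form $c\,X_0^{k}$, so (by the Nullstellensatz, a polynomial vanishing only where $X_0=0$ being forced to equal $c\,X_0^{k}$) its zero set in $\mathbb C^{l+1}$ is not contained in $\{X_0=0\}$. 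Hence there are values $v_0\ne0,v_1,\ldots,v_l$ with $P_{\sigma_0}(v_0,\ldots,v_l)=0$, and taking $g$ to be a polynomial in $s-\sigma_0$ with $g^{(i)}(\sigma_0)=v_i$ makes $g$ zero-free on a small enough $\overline B$ (since $g(\sigma_0)=v_0\ne0$) while giving $\widetilde P(\sigma_0)=0$. Choosing the point $(v_0,\ldots,v_l)$ to be a smooth point of the hypersurface $\{P_{\sigma_0}=0\}$ and tuning the higher Taylor coefficients of $g$ transversally makes $\sigma_0$ an isolated zero of $\widetilde P$, so in particular $\widetilde P\not\equiv0$.

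Finally I would transfer and count. On a circle $|s-\sigma_0|=r'<r$ carrying no other zero of $\widetilde P$, put $\mu:=\min_{|s-\sigma_0|=r'}|\widetilde P(s)|>0$ and take $\varepsilon<\mu$. For every $\tau$ in the positive-density set above, Rouch\'e's theorem applied to $\widetilde P(s)$ and $F(s+i\tau)$ shows that $F(s+i\tau)$ has a zero with $|s-\sigma_0|<r'$, that is, $F$ has a zero in $\overline{B(\sigma_0+i\tau,r')}$, which lies in the rectangle $\sigma_1<\sigma<\sigma_2$, $0<t<T$. Selecting from this set a subsequence of shifts that are pairwise more than $2r'$ apart, still of size $\gg\delta T$, makes the disks disjoint and the corresponding zeros distinct, producing more than $C_1T$ zeros for a suitable $C_1>0$. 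The main obstacle is precisely the middle step: reconciling the three demands that $g$ be zero-free (so the universality theorem applies), that $\widetilde P$ actually vanish in the disk, and that $\widetilde P\not\equiv0$ (so that Rouch\'e is informative); the positive-degree-derivative hypothesis is what guarantees the required zero-free target $g$ exists.
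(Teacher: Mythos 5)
Your overall architecture coincides with the paper's: combine hybrid universality with the almost periodicity of the finitely many coefficient series $c_{\mathbf j}\in\mathcal D_s$ to get simultaneous approximation on a positive-density set of shifts, upgrade to derivatives by Cauchy's integral formula on a slightly smaller disk, choose a zero-free target $g$ so that $\widetilde P(s)=P_s(g(s),\dots,g^{(l)}(s))$ vanishes at the centre but not identically, apply Rouch\'e, and count well-separated shifts. Those outer steps are exactly the paper's Lemmas 2.2 and 2.3 and its concluding Rouch\'e argument, and they are fine. The gap is in the middle step, which you yourself flag as the heart of the matter: your justification that $\widetilde P$ has an isolated zero at $\sigma_0$ does not go through as stated. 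First, a smooth point of $\{P_{\sigma_0}=0\}$ with $v_0\ne 0$ need not exist: for $P_{\sigma_0}=X_0X_1^2$ (admissible, since $\deg X_1=2>0$) every point of the zero set with $v_0\ne0$ lies on the doubled component $\{X_1=0\}$, where $\nabla P_{\sigma_0}=(X_1^2,\,2X_0X_1)$ vanishes; the same happens whenever $P_{\sigma_0}$ has a repeated irreducible factor other than $X_0$, e.g.\ $(X_1-X_0)^2$. Second, even at a smooth point, ``tuning the higher Taylor coefficients transversally'' is not available in the direction you need: the velocity of the jet curve $s\mapsto(g(s),\dots,g^{(l)}(s))$ at $\sigma_0$ is $(v_1,\dots,v_l,g^{(l+1)}(\sigma_0))$, of which only the last coordinate is free once $v$ is fixed, so you can force $\widetilde P'(\sigma_0)\ne0$ only when $\partial P_{\sigma_0}/\partial X_l(v)\ne0$ --- smoothness only guarantees that \emph{some} partial derivative is nonzero (and there is an additional fixed term $\sum_{\mathbf j}c_{\mathbf j}'(\sigma_0)\prod_i v_i^{j_i}$ from differentiating the coefficients). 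What you actually need is only $\widetilde P\not\equiv0$, since an isolated zero then comes for free from holomorphy; but even that is not automatic and genuinely depends on $g$ --- e.g.\ for $P=X_1^2-X_0X_2$ and $g=e^{\eta s}$ one has $\widetilde P\equiv0$.

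The paper gets around this by a case split with explicit targets. For a monomial it reduces to the factor $D(s)L^{(k)}(s)$ and takes $f(s)=se^{-ks/\alpha}$, so that $f^{(k)}$ has a single simple zero at $\alpha$ and the relevant comparison function is a product of functions each $\not\equiv0$; for a non-monomial it chooses all the $\theta_j$ nonzero with $\sum D_{\mathbf d}(\alpha)\prod_j\theta_j^{d_j}=0$ (possible because at least two coefficients are nonzero at $\alpha$) and takes $f=e^{p}$ via its Lemma \ref{lem:661}, which is automatically zero-free on the whole disk. Your Nullstellensatz observation that $Z(P_{\sigma_0})\not\subseteq\{X_0=0\}$ unless $P_{\sigma_0}=cX_0^k$ is correct and is an attractive, unified way to produce the point $v$ with $v_0\ne0$; but to close the proof you must still supply, for that $v$, an honest argument that some admissible $g$ yields $\widetilde P\not\equiv0$ --- for instance by treating the monomial case separately (where $\widetilde P$ factors and the claim is immediate for any nonconstant polynomial $g$), as the paper does.
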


\begin{corollary}\label{cor:1}
Suppose that a function $L(s)$ is hybridly universal, and $P_s \in \mathcal{D}_s[X]$ is a polynomial with degree greater than zero. Then, for any $1/2 < \sigma_1 < \sigma_2 < 1$, there exists a constant $C_1>0$ such that for sufficiently large $T$, the function $(d^k / ds^k) P_s(L(s))$ with $k \in {\mathbb{N}}$ has more than $C_1T$ nontrivial zeros in the rectangle $\sigma_1 < \sigma < \sigma_2$, $0 < t <T$. 
\end{corollary}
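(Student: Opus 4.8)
The plan is to deduce the corollary from Theorem~\ref{th:1} by showing that $(d^k/ds^k)P_s(L(s))$ is itself of the form $Q_s(L(s),L^{(1)}(s),\ldots,L^{(k)}(s))$ for a polynomial $Q_s\in\mathcal{D}_s[X_0,X_1,\ldots,X_k]$ satisfying the degree hypothesis of that theorem, namely that at least one of $X_1,\ldots,X_k$ occurs with positive degree. Once this reduction is in place, applying Theorem~\ref{th:1} with $l=k$ to $Q_s$ immediately yields more than $C_1T$ nontrivial zeros of $(d^k/ds^k)P_s(L(s))$ in the prescribed rectangle.

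First I would record that $\mathcal{D}_s$ is closed under differentiation: if $D(s)=\sum_n a_n e^{-\lambda_n s}$ converges absolutely for $\Re(s)>1/2$, then termwise differentiation gives $D'(s)=\sum_n(-\lambda_n)a_n e^{-\lambda_n s}$, and comparing $\sigma$ with an intermediate abscissa $\sigma_0\in(1/2,\sigma)$ shows that $|\lambda_n|e^{-\lambda_n(\sigma-\sigma_0)}$ stays bounded, so $D'\in\mathcal{D}_s$ as well; hence every derivative of a coefficient of $P_s$ again lies in $\mathcal{D}_s$. Writing $P_s(X)=\sum_{j=0}^d c_j(s)X^j$ with $c_j\in\mathcal{D}_s$, $c_d\neq0$ and $d=\deg P_s\ge1$, I would apply the general Leibniz rule to each summand $c_j(s)L(s)^j$,
\begin{equation*}
\frac{d^k}{ds^k}\bigl(c_j(s)L(s)^j\bigr)=\sum_{m=0}^k\binom{k}{m}c_j^{(k-m)}(s)\frac{d^m}{ds^m}\bigl(L(s)^j\bigr),
\end{equation*}
and expand each $\frac{d^m}{ds^m}(L^j)$ by Fa\`a di Bruno. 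This exhibits $(d^k/ds^k)P_s(L(s))$ as a polynomial in $L,L^{(1)},\ldots,L^{(k)}$ whose coefficients are $\mathbb{Z}$-linear combinations of the $c_j^{(i)}$, hence lie in $\mathcal{D}_s$; this is the desired $Q_s$.

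The main point, and the step I expect to be the only real obstacle, is verifying that $X_k$ actually occurs in $Q_s$ with positive degree, that is, that no cancellation kills the highest-derivative term. Since each factor $L^{(k)}$ consumes all $k$ of the available differentiations, $L^{(k)}$ can appear only linearly, and its only source is the $m=k$ term of the Leibniz expansion: differentiating $L^j$ exactly $k$ times and placing all $k$ derivatives on a single factor produces $j\,L^{j-1}L^{(k)}$. Summing over $j$, the coefficient of $X_k$ in $Q_s$ is the formal $X$-derivative $(\partial P_s/\partial X)(X_0)=\sum_{j=1}^d j\,c_j(s)X_0^{j-1}$, whose top term $d\,c_d(s)X_0^{d-1}$ is nonzero because $c_d\neq0$ and $d\ge1$. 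As $L^{(k)}$ is the derivative of highest order appearing, this term cannot be cancelled by any monomial built from $L,\ldots,L^{(k-1)}$, so $\deg_{X_k}Q_s=1>0$. With the degree hypothesis thus confirmed, the corollary follows from Theorem~\ref{th:1}.
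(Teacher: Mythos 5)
Your proposal is correct and follows essentially the same route as the paper: the paper's proof likewise reduces the corollary to Theorem~\ref{th:1} by observing that $\mathcal{D}_s$ is closed under differentiation (citing Hardy--Riesz), so that $(d^k/ds^k)P_s(L(s))$ is again a polynomial in $L, L^{(1)},\ldots$ with coefficients in $\mathcal{D}_s$. You additionally spell out, via Leibniz and Fa\`a di Bruno, why $X_k$ genuinely occurs with positive degree and cannot be cancelled --- a hypothesis of Theorem~\ref{th:1} that the paper leaves implicit --- which is a worthwhile supplement but not a different argument.
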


\begin{proposition}\label{pro:1}
For each $0 \le j \le l$, let $L^{(j)}(s)$ be a function defined as a Dirichlet series for $\sigma>1$ which can be continued analytically to a meromorphic function on $\Re (s)>1/2$ with a finite number of poles and all of them lie on the line $\Re (s)=1$. Moreover for each $0 \le j \le l$, suppose that $L^{(j)}(s)$ is a function of finite order and for any fixed $1/2< \Re (s)<1$, the square mean-value satisfies
\begin{equation}\label{eq:1pro1}
\int_0^T \bigl|L^{(j)}(\sigma+it)\bigr|^2dt \ll T, \quad \mbox{as} \quad T\to\infty .
\end{equation}
Then, for any $\sigma_0>1/2$ and any polynomial $P_s \in {\mathcal{D}}_s [X_0,X_1, \ldots,X_l]$, there exists a constant $C_2>0$ such that for sufficiently large $T$, the number of zeros $\rho$ with multiplicities of $P_s(L(s), L^{(1)}(s),\ldots, L^{(l)}(s))$ with $\Re (\rho)>\sigma_0$ and $0<\Im(\rho)<T$ is less than $C_2T$.
\end{proposition}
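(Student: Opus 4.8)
The plan is to use Littlewood's lemma together with the second-moment hypothesis (\ref{eq:1pro1}) to produce an upper bound of size $\ll T$ for a weighted sum over the zeros, and then convert this into a bound on the number of zeros. Write $F(s) := P_s(L(s),L^{(1)}(s),\ldots,L^{(l)}(s))$. Since each $L^{(j)}$ is meromorphic on $\Re(s)>1/2$ with only finitely many poles, all on $\Re(s)=1$, the function $F$ is meromorphic there with finitely many poles; multiplying by a suitable polynomial $Q(s)$ whose zeros cancel these poles yields a function $G(s):=Q(s)F(s)$ holomorphic on $\{\Re(s)>1/2\}$ and of finite order. Because $\deg Q = O(1)$, the zero counts of $F$ and $G$ in any region differ by $O(1)$, so it suffices to bound the zeros of $G$.

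First I would localize the zeros to a bounded strip. As $\Re(s)\to+\infty$ each Dirichlet series $L^{(j)}(s)$ and each coefficient of $P_s$ tends to a constant, so $F(s)$ tends to a constant $C_\infty$; assuming $F\not\equiv 0$ with $C_\infty\neq 0$ (the degenerate case being handled by passing to the dominant monomial), there is a $\sigma^*$ with $|F(s)|\gg 1$ for $\Re(s)\geq\sigma^*$. Hence every zero with $\Re(\rho)>\sigma_0$ actually satisfies $\sigma_0<\Re(\rho)\leq\sigma^*$, and $\log|G(\sigma_2+it)|$ is bounded for $\sigma_2:=\sigma^*+1$.

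Next, fix $1/2<\sigma_1<\sigma_0$ and apply Littlewood's lemma to $G$ on the rectangle $\sigma_1\leq\sigma\leq\sigma_2$, $0\leq t\leq T$ (perturbing $\sigma_1,\sigma_2,T$ by $O(1)$ to avoid boundary zeros). This expresses $2\pi\sum_{\rho}(\Re\rho-\sigma_1)$, the sum over zeros of $G$ in the rectangle, as $\int_0^T\log|G(\sigma_1+it)|\,dt-\int_0^T\log|G(\sigma_2+it)|\,dt$ plus the two horizontal argument integrals. The right-hand boundary term is $\ll T$ because $\log|G(\sigma_2+it)|$ is bounded, and the horizontal integrals are $\ll\log T$ by the standard finite-order estimate for the variation of $\arg G$ along horizontal segments. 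For the left-hand term I would bound it above by $\int_0^T\log^+|G(\sigma_1+it)|\,dt$; writing $G$ as a finite sum of monomials $c_\alpha(s)\prod_j(L^{(j)})^{\alpha_j}$ with bounded coefficients and using $\log^+\prod_j x_j^{\alpha_j}\leq\sum_j\alpha_j\log^+ x_j$ reduces the task to the integrals $\int_0^T\log^+|L^{(j)}(\sigma_1+it)|\,dt$, each of which is $\ll T$ by Jensen's inequality (concavity of $\log$) applied to (\ref{eq:1pro1}):
\[
\int_0^T\log^+|L^{(j)}(\sigma_1+it)|\,dt\leq\frac{T}{2}\log\Big(1+\frac1T\int_0^T|L^{(j)}(\sigma_1+it)|^2\,dt\Big)\ll T.
\]

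Combining these estimates gives $\sum_{\rho}(\Re\rho-\sigma_1)\ll T$, the sum being over zeros of $G$ in the rectangle. Since every zero of $F$ with $\Re(\rho)>\sigma_0$ lies in this rectangle and contributes at least $\sigma_0-\sigma_1>0$ to the sum, the number of such zeros counted with multiplicity is $\ll T$; restoring the $O(1)$ zeros introduced by $Q$ then finishes the proof. I expect the main obstacle to be the left-hand boundary term, where the reduction from $\log|G|$ to $\log^+$ and the passage from the mean-square bound (\ref{eq:1pro1}) to a mean bound for $\log^+|F|$ of the full polynomial is carried out; this is exactly where the polynomial structure and the hypothesis interact. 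The pole clearing, the localization to a bounded strip, and the $O(\log T)$ argument estimates are routine.
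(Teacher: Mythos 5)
Your proposal is correct and follows essentially the same route as the paper: the paper also reduces the count to Littlewood's lemma as in Steuding's Theorems 7.1 and 7.6 (and Main Theorem 2 of \cite{napa4}), with the only substantive input being a mean-value bound derived from (\ref{eq:1pro1}) via H\"older's inequality, namely $\int_0^T |D_{d_0\ldots d_l}(\sigma+it)(L^{(0)})^{d_0}\cdots(L^{(l)})^{d_l}|^{2/n}\,dt \ll T$ for each monomial, which plays exactly the role of your Jensen-type bound $\int_0^T \log^+|L^{(j)}(\sigma_1+it)|\,dt \ll T$. The paper leaves the pole-clearing, the confinement of zeros to a bounded strip via the dominant term of the Dirichlet series, and the boundary estimates implicit by citation, whereas you spell them out; this is only a difference in level of detail, not in method.
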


\begin{corollary}\label{cor:2}
Let $L^{(j)}(s)$ satisfy all assumptions in Proposition \ref{pro:1}. Then, for any $\sigma_0>1/2$ and any polynomial $P_s \in {\mathcal{D}}_s [X_0]$, there exists a constant $C_2>0$ such that for sufficiently large $T$,  the number of zeros $\rho$ with multiplicities of $(d^k / ds^k) P_s(L(s))$ with $\Re (\rho)>\sigma_0$ and $0<\Im(\rho)<T$ is less than $C_2T$.
\end{corollary}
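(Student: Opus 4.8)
The plan is to deduce Corollary \ref{cor:2} from Proposition \ref{pro:1} by recognizing the $k$-th derivative of $P_s(L(s))$ as a polynomial in $L(s)$ together with its first $k$ derivatives, whose coefficients again lie in $\mathcal{D}_s$. Writing $P_s(X_0) = \sum_{m=0}^{d} c_m(s) X_0^m$ with each $c_m \in \mathcal{D}_s$, so that $P_s(L(s)) = \sum_{m=0}^{d} c_m(s) L(s)^m$, the general Leibniz rule gives
\[
\frac{d^k}{ds^k} P_s(L(s)) = \sum_{m=0}^{d} \sum_{j=0}^{k} \binom{k}{j} c_m^{(k-j)}(s)\, \frac{d^j}{ds^j} L(s)^m ,
\]
and, by Fa\`a di Bruno's formula (or a short induction), each $\frac{d^j}{ds^j} L(s)^m$ is a polynomial in $L(s), L^{(1)}(s), \ldots, L^{(j)}(s)$ with nonnegative integer coefficients. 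Collecting terms and noting that the highest-order derivative of $L$ occurring is exactly $L^{(k)}$ (from the $j=k$ contribution $m\,L^{m-1}L^{(k)}$), one obtains an identity $(d^k/ds^k) P_s(L(s)) = Q_s(L(s), L^{(1)}(s), \ldots, L^{(k)}(s))$ for a suitable polynomial $Q_s$ in the variables $X_0, \ldots, X_k$.

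The point that requires care is that $Q_s$ genuinely belongs to $\mathcal{D}_s[X_0, \ldots, X_k]$, i.e.\ that its coefficients lie in $\mathcal{D}_s$. Since $\mathcal{D}_s$ is a ring and the coefficients of $Q_s$ are built from the $c_m^{(k-j)}$ by ring operations and integer multiples, it suffices to show that $\mathcal{D}_s$ is closed under differentiation. Differentiating $\sum_n a_n e^{-\lambda_n s}$ termwise yields $\sum_n (-\lambda_n) a_n e^{-\lambda_n s}$; for any $\sigma>1/2$ choose $\sigma' \in (1/2,\sigma)$, and since the $\lambda_n$ are bounded below (otherwise the original series could not converge absolutely on $\Re(s)>1/2$) the factor $|\lambda_n| e^{-\lambda_n(\sigma-\sigma')}$ is uniformly bounded, whence $\sum_n |\lambda_n a_n| e^{-\lambda_n \sigma} \ll \sum_n |a_n| e^{-\lambda_n \sigma'} < \infty$. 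Thus the differentiated series converges absolutely on $\Re(s)>1/2$, so $\mathcal{D}_s$ is differentiation-closed and $Q_s \in \mathcal{D}_s[X_0, \ldots, X_k]$. With this in hand I would simply invoke Proposition \ref{pro:1}: by hypothesis the functions $L = L^{(0)}, L^{(1)}, \ldots, L^{(k)}$ satisfy all of its assumptions, so taking $l=k$ and the polynomial $Q_s$ there yields, for any $\sigma_0>1/2$, a constant $C_2>0$ such that the number of zeros (with multiplicity) of $Q_s(L(s), L^{(1)}(s), \ldots, L^{(k)}(s)) = (d^k/ds^k) P_s(L(s))$ with $\Re(\rho)>\sigma_0$ and $0<\Im(\rho)<T$ is less than $C_2 T$ for all sufficiently large $T$, which is precisely the assertion.

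The main obstacle, such as it is, lies entirely in the second paragraph: the whole argument collapses unless $\mathcal{D}_s$ is stable under $d/ds$, for otherwise the reduced polynomial $Q_s$ would fall outside the scope of Proposition \ref{pro:1}. Everything else is a purely formal manipulation of the Leibniz and Fa\`a di Bruno expansions, and no new analytic input (in particular no new verification of the meromorphic continuation, the finite order, or the second-moment bound \eqref{eq:1pro1}) is needed, since those hypotheses are assumed directly for the very functions $L^{(0)}, \ldots, L^{(k)}$ that are substituted into $Q_s$.
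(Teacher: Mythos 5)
Your argument is correct and follows essentially the same route as the paper: rewrite $(d^k/ds^k)P_s(L(s))$ via Leibniz/Fa\`a di Bruno as a polynomial $Q_s(L(s),L^{(1)}(s),\ldots,L^{(k)}(s))$ with $Q_s\in\mathcal{D}_s[X_0,\ldots,X_k]$, the key point being that $\mathcal{D}_s$ is closed under differentiation, and then invoke Proposition \ref{pro:1}; the paper does exactly this but simply cites Hardy--Riesz for the differentiation-closure instead of proving it. (One small caveat: your parenthetical claim that absolute convergence on $\Re(s)>1/2$ forces the $\lambda_n$ to be bounded below is not literally true for arbitrary real exponents, but it holds under the standard Hardy--Riesz convention $\lambda_n\nearrow\infty$ implicitly used here, and the closure statement itself remains valid in general by comparing the negative-exponent terms at a point to the right of $\sigma$.)
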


It should be mentioned that Theorem \ref{th:1} and Proposition \ref{pro:1} are generalizations of Main Theorems 1 and 2 in \cite{napa4}, respectively (see Theorem \ref{th:mthm1}). 

\subsection{Hybrid universality}
In 1975, Voronin \cite{Voronin} showed the following universality theorem for the Riemann zeta-function $\zeta (s) := \sum_{n=1}^\infty n^{-s}$. In order to state it, we need some notation. Let $D := \{ s \in {\mathbb{C}} : 1/2 < \Re (s) <1\}$. Denote by $\mu(A)$ the Lebesgue measure of the set $A$, and, for $T>0$, write $\nu_T \{ \cdots \} := T^{-1} \mu \{ \tau \in [0,T] : \cdots \}$ where the dots stand for a condition satisfied by $\tau$. Then the modern version of the Voronin theorem can be written as follows.
\begin{thm}\label{thm:Voronin}
Let $K\subset D$ be a compact set with connected complement, and $f(s)$ be a non-vanishing continuous function on $K$ which is analytic in the interior of $K$.
Then for any $\varepsilon>0$, it holds that
$$
\liminf_{T \to \infty} \nu_T \Bigl\{ \max_{s \in K} \bigl| \zeta(s+i\tau)-f(s) \bigr| <\varepsilon \Bigr\}>0.
$$
\end{thm}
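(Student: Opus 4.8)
The plan is to follow the classical route to universality theorems, reducing the statement to a denseness assertion about a prime-indexed family of analytic functions and then invoking equidistribution. First I would perform the standard reductions. Since $\mathbb{C}\setminus K$ is connected and $f$ is non-vanishing and continuous on $K$ (analytic in its interior), one may choose a continuous logarithm $g(s)=\log f(s)$ that is analytic in the interior of $K$; by Mergelyan's theorem $g$ can then be approximated uniformly on $K$ by a polynomial, so it costs nothing to assume $g$ is analytic on a neighbourhood of $K$. Thus it suffices to show that, for any such target $g$ and any $\varepsilon>0$, the set of $\tau$ with $\max_{s\in K}|\log\zeta(s+i\tau)-g(s)|<\varepsilon$ has positive lower density, where $\log\zeta$ denotes the branch defined away from the zeros.

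Next I would introduce the truncated Euler product $\zeta_N(s):=\prod_{p\le N}(1-p^{-s})^{-1}$ and its logarithm $\log\zeta_N(s)=\sum_{p\le N}\sum_{k\ge 1}k^{-1}p^{-ks}$. Using a second-moment estimate for $\zeta$ in the strip $1/2<\Re(s)<1$ of the type (\ref{eq:1pro1}), together with a limiting-distribution argument, one shows that $\log\zeta(s+i\tau)$ and $\log\zeta_N(s+i\tau)$ are $L^2$-close on a disk containing $K$ for a set of $\tau$ whose density tends to $1$ as $N\to\infty$. This transfers the problem to approximating $g$ by shifts of the finite sum $\log\zeta_N$.

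The analytic heart is the denseness of the value set of these finite sums. Writing $\log\zeta_N(s+i\tau)$ in terms of the phases $\theta_p(\tau):=\tau\log p\pmod{2\pi}$, the behaviour is governed by the point $(\theta_p(\tau))_{p\le N}$ on the torus $\mathbb{T}^{\pi(N)}$. Because $\{\log p:p\text{ prime}\}$ is linearly independent over $\mathbb{Q}$, the Kronecker--Weyl theorem shows these points are equidistributed with respect to Haar measure; hence the density of good $\tau$ equals the Haar measure of the set of phase vectors $(\theta_p)_{p\le N}$ for which $\sum_{p\le N}\log(1-e^{-i\theta_p}p^{-s})^{-1}$ lies within $\varepsilon$ of $g$ on $K$. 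It remains to prove that this set has positive measure, equivalently that the family $s\mapsto\log(1-e^{-i\theta_p}p^{-s})^{-1}$ can be summed, over suitable phases, to approximate any prescribed $g$.

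This denseness is where the main difficulty lies, and I would establish it via a rearrangement theorem for conditionally convergent series in a Hilbert space (Pechersky's theorem, generalizing Riemann's classical result). Working in the space of functions analytic on a disk containing $K$ with an area $L^2$ inner product, and fixing a point $\sigma_0\in(1/2,1)$, one checks that the dominant summands behave in norm like $p^{-\sigma_0}$: they are square-summable, since $\sum_p p^{-2\sigma_0}<\infty$ for $\sigma_0>1/2$, yet not absolutely summable, since $\sum_p p^{-\sigma_0}=\infty$ for $\sigma_0<1$. This gap, whose source is the abundance of primes, is exactly the input that makes the series ``fat'' enough to be rearranged to a dense set of sums, so that Pechersky's theorem yields a full affine subspace of attainable limits containing $g$. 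Combining this positive Haar measure with the equidistribution of the third paragraph and the $L^2$-approximation of the second, and finally exponentiating to pass from $\log\zeta$ back to $\zeta$ and from $g$ back to $f$, gives the desired positive lower density. The principal obstacle throughout is verifying the hypotheses of the rearrangement theorem for this specific prime-indexed family, since that is precisely where the arithmetic of the primes enters decisively.
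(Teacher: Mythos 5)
The paper does not prove this statement: it is quoted as Theorem \ref{thm:Voronin} from Voronin's 1975 paper \cite{Voronin} (see also \cite{Lali} and \cite{St1877}) and used as a black box, so there is no internal proof to compare against. Your outline is the standard modern route to Voronin's theorem --- logarithm plus Mergelyan reduction, truncation to the finite Euler product $\zeta_N$, Kronecker--Weyl equidistribution of $(\tau\log p)_{p\le N}$ on the torus, and Pechersky's rearrangement theorem in a Bergman-type Hilbert space --- and it is correct in its architecture. Two points are compressed to the point of hiding real work. First, the ``limiting-distribution argument'' that $\log\zeta(s+i\tau)$ and $\log\zeta_N(s+i\tau)$ are close for a set of $\tau$ of density near $1$ is delicate: $\log\zeta$ is not even defined near the (possibly existing) zeros of $\zeta$ in the strip, so one needs a zero-density estimate together with a Carlson-type mean-value theorem for $\zeta-\zeta_N$, not merely the second moment of $\zeta$ itself; one must also pass from area-$L^2$ bounds back to the sup norm on $K$ via the Cauchy integral formula. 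Second, square-summability of $\|u_p\|$ together with divergence of $\sum_p p^{-\sigma_0}$ is necessary but not sufficient for Pechersky's theorem: the actual hypothesis is that $\sum_p|\langle u_p,e\rangle|$ diverges for \emph{every} nonzero functional $e$ in the Hilbert space, and verifying this uses the prime number theorem and a nontrivial argument about Dirichlet series with bounded coefficients. You correctly flag this as the principal obstacle, but in a complete write-up it is the longest part of the proof.
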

Roughly speaking, this theorem implies that any non-vanishing analytic function can be uniformly approximated by the Riemann zeta-function. In the current era, it is known that many zeta and $L$-functions, for example, Dirichlet $L$-functions, Dedekind zeta functions, Lerch zeta functions and $L$-functions associated with newforms, have the universality property (see for instance \cite{Lali}, \cite{MatsuS14} and \cite{St1877}). 

Combining the Kronecker-Weyl theorem on diophantine approximations and Voronin's universality theorem, we can define the hybrid universality. Denote the distance to the nearest integer by $\Vert\cdot\Vert$. The precise definition is as follows.
\begin{definition}
\emph{Hybrid universality} for the function $L(s)$ is the following property: Let $K$ and $f(s)$ be the same as in Theorem \ref{thm:Voronin}. Suppose that $\{\alpha_j\}_{1\le j \le k}$ are real numbers linearly independent over $\mathbb{Q}$. Then for any $\varepsilon>0$ and any real numbers $\{\theta_j\}_{1\le j \le k}$, 
$$
\liminf_{T\to\infty} \nu_T \Bigl\{ 
\max_{s \in K} \bigl| L(s+i\tau) - f(s) \bigr| < \varepsilon , \quad
\Vert\tau\alpha_j - \theta_j\Vert < \varepsilon, \,\,\, 1\le j \le k
\Bigr\}>0.
$$
\end{definition}
As mentioned in \cite{napa4}, the first result on hybrid universality was proved in weaker form by Gonek \cite{Gonek} and improved  by Kaczorowski and Kulas \cite{KaczorowskiKulas}. They showed that Dirichlet $L$-functions $L(s,\chi) := \sum_{n=1}^\infty \chi (n) n^{-s}$ satisfy the inequality in above definition for $\alpha_n = \log{p_n}$, where $p_n$ denotes the $n$-th prime number. Afterwards, Pa\'nkowski \cite{PankowskiHybrid} proved the hybrid universality in the most general form for an axiomatically defined wide class of $L$-functions having Euler product.  By using the hybrid universality, Nakamura and Pa\'nkowski showed the following Theorem in \cite[Main Theorem 1]{napa4}. 

\begin{thm}\label{th:mthm1}
Suppose that a function $L(s)$ is hybridly universal, $P_s \in \mathcal{D}_s[X]$ is not a monomial but a polynomial with degree greater than zero. Then, the function $P_s(L(s))$ has infinitely many zeros in $D$. More precisely, for any $1/2 < \sigma_1 < \sigma_2 < 1$, there exists a constant $C>0$ such that for sufficiently large $T$, the function $P_s(L(s))$  has more than $CT$ nontrivial zeros in the rectangle $\sigma_1 < \sigma < \sigma_2$, $0 < t <T$. 
\end{thm}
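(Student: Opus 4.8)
The plan is to transfer, via Rouch\'e's theorem, a zero of a suitable limit function to $F(s):=P_s(L(s))$ along a positive-density set of vertical shifts produced by the hybrid universality of $L$. Write $P_s(X)=\sum_{j=0}^{d}b_j(s)X^j$ with $b_j\in\mathcal D_s$, $d\ge1$, and, since $P_s$ is not a monomial, at least two of the $b_j$ not identically zero. Each $b_j$ with $b_j\not\equiv0$ is real-analytic and not identically zero on $(\sigma_1,\sigma_2)$, so its real zeros there are isolated; hence I may fix $\sigma_0\in(\sigma_1,\sigma_2)$ with $b_j(\sigma_0)\ne0$ for every such $j$. The one-variable polynomial $P_{\sigma_0}(X)=\sum_jb_j(\sigma_0)X^j$ then has degree $\ge1$ and is not a monomial, so it possesses a nonzero root $w_0$. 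I take a closed disk $\overline K=\{|s-\sigma_0|\le r\}\subset D$ with $r$ small and a non-vanishing analytic $f$ on $\overline K$ with $f(\sigma_0)=w_0$ and $f(s)\ne w_0$ on $\partial K$ (for instance $f(s)=w_0e^{\,s-\sigma_0}$); shrinking $r$ I may assume $f(\overline K)$ meets no root of $P_{\sigma_0}$ other than $w_0$, so that $G(s):=P_{\sigma_0}(f(s))$ vanishes in $K$ only at $\sigma_0$ and $2\delta_0:=\min_{\partial K}|G|>0$.

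Next I would use hybrid universality to manufacture many shifts on which $F(\cdot+i\tau)$ is uniformly close to $G$. Fix a small $\varepsilon>0$ and truncate each $b_j$ so that the discarded tails are $<\varepsilon$ on $\{\Re s\ge\sigma_1\}$; only finitely many frequencies $\lambda_{j,n}$ then survive. Let $\gamma_1,\dots,\gamma_m$ be a $\mathbb Z$-basis of the (finitely generated, hence free) subgroup of $\mathbb R$ generated by the surviving $\lambda_{j,n}/(2\pi)$; such a basis is automatically linearly independent over $\mathbb Q$. Applying the definition of hybrid universality with the compact set $\overline K$, the target $f$, the numbers $\gamma_1,\dots,\gamma_m$ and phase targets $\theta_i=0$, I obtain a set of $\tau\in[0,T]$ of positive lower density on which simultaneously $\max_{\overline K}|L(s+i\tau)-f(s)|<\varepsilon$ and $\Vert\tau\gamma_i\Vert<\varepsilon$ for all $i$. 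Because each surviving $\lambda_{j,n}/(2\pi)$ is an integer combination of the $\gamma_i$, the conditions $\Vert\tau\gamma_i\Vert<\varepsilon$ force $e^{-i\lambda_{j,n}\tau}\approx1$ for every surviving frequency, whence $b_j(s+i\tau)\approx b_j(s)\approx b_j(\sigma_0)$ uniformly on $\overline K$. Since the $b_j$ are uniformly bounded on $\{\Re s\ge\sigma_1\}$ and $L(s+i\tau)\approx f(s)$ there, it follows that $F(s+i\tau)$ lies within $O(\varepsilon)$ of $G(s)$ uniformly on $\overline K$.

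Choosing $\varepsilon$ so that this error is $<\delta_0$ on $\partial K$, Rouch\'e's theorem shows that for every such $\tau$ the function $F(s+i\tau)$ has in $K$ the same number of zeros as $G$, in particular at least one; equivalently $F$ has a zero $s_\tau$ with $|s_\tau-(\sigma_0+i\tau)|<r$, and by construction $\sigma_1<\Re(s_\tau)<\sigma_2$. To pass from measure to a count I extract from the good set of lower density $\ge c>0$ a subset of $\gg T$ shifts that are pairwise more than $2r$ apart; the corresponding zeros $s_\tau$ then lie in disjoint horizontal strips, so they are distinct, and for $T$ large they all satisfy $0<\Im(s_\tau)<T$. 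This yields more than $CT$ nontrivial zeros of $F$ in the rectangle, as required.

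I expect the crux to be the middle paragraph: controlling the shifted coefficient series $b_j(s+i\tau)$ at the same time as the approximation of $L$. The delicacy is that the frequencies $\lambda_{j,n}$ may be $\mathbb Q$-linearly \emph{dependent}, so their phases cannot be prescribed independently; the remedy is to pass, on a truncation, to an integer generating set of the frequency group, after which the single diophantine condition $\Vert\tau\gamma_i\Vert<\varepsilon$ pins all the phases to $1$ at once. This is precisely the point at which the \emph{hybrid} (rather than the plain) universality is indispensable, and it is also where the hypothesis that $P_s$ is not a monomial is used: without it $P_{\sigma_0}$ would be $b_d(\sigma_0)X^d$, whose only root is $0$, and a non-vanishing target $f$ could never force $G$ to vanish.
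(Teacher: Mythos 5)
Your proposal is correct and follows essentially the same route as the paper's argument (the non-monomial branch of the proof of Theorem \ref{th:1}, which the author notes contains Theorem \ref{th:mthm1}): pick a center $\alpha$ where the coefficients do not vanish, send it via a non-vanishing target $f$ to a nonzero root of the frozen polynomial $P_\alpha(X)$, control the shifted coefficients by almost periodicity of general Dirichlet series (the paper's Lemma \ref{lem:AlmostPer}, which you reprove inline via a $\mathbb{Z}$-basis of the frequency group) together with hybrid universality, and transfer the zero by Rouch\'e's theorem on a positive-density set of shifts. Your extra care in freezing the coefficients at $\sigma_0$ and in extracting $\gg T$ well-separated shifts only makes explicit what the paper leaves implicit.
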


Moreover, they gave an upper bound for the number of zeros of certain polynomials of $L$-functions in \cite[Main Theorem 2]{napa4}. We can say that Theorem \ref{th:1} and Proposition \ref{pro:1} are multidimensional cases of Main Theorems 1 and 2 in \cite{napa4}, respectively. It should be mentioned that they also showed that a lot of zeta or $L$-functions have infinitely many complex zeros off the critical line in \cite[Section 3]{napa4} (see also Section 3.2). 

\subsection{Zeros of the derivatives of the Riemann zeta function}
Zeros of the derivatives of the Riemann zeta function have been investigated by many mathematicians. In 1935, Speiser \cite{Spei} showed that the Riemann Hypothesis is equivalent to $\zeta '(s)$ having no zeros in the vertical strip $0 < \Re (s) <1/2$. Let $N_k (T)$, $k\ge 1$, denote the number of zeros $\beta_k+i\gamma_k$ of the $k$-th derivative of the Riemann zeta function $\zeta^{(k)} (s)$ such that $0<\gamma_k<T$. Berndt \cite[p.~577]{Ber} proved that
$$
N_k (T) = \frac{T \log T}{2\pi} - \frac{1+\log 4\pi}{2\pi}T+O(\log T), \qquad T \to \infty.
$$ 
Levinson and Montgomery \cite{LM} obtained several results on the zeros of $\zeta^{(k)} (s)$. For example, they showed 
\begin{equation*}
\begin{split}
2\pi \sum_{0< \gamma_k \le T} (\beta_k-1/2) = & \, k T \log \log(T/2\pi) - 2\pi k {\rm{Li}} (T/2\pi) \\
&+ (\log 2 - 2k \log \log 2) (T/2) + O(\log T),
\end{split}
\end{equation*} 
where ${\rm{Li}} (x) := \int_2^x dy /\log y$ in \cite[Theorem 10]{LM}. They also proved in \cite[Theorem 7]{LM} that if $\zeta^{(j)}$, $j \in {\mathbb{N}} \cup \{0\}$ has only a finite number of non-real zeros in $\Re (s) <1/2$, then $\zeta^{(j+k)}$ has the same property for $k\ge 1$. 

On the other hand, Launrin\v cikas \cite{Lau85} showed the following. For any $\sigma_1$ and $\sigma_2$ such that $1/2 < \sigma_1 < \sigma_2 < 1$, there exists a $C = C(\sigma_1, \sigma_2) > 0$ such that for sufficiently large $T$ one can find more than $CT$ zeros of the function $\zeta'(s)$ lying in the domain $\sigma_1 < \Re (s) <\sigma_2$ and $0 < \Im (s) <T$. In \cite[Theorem 2]{Mey}, Meyrath proved that $\zeta^{(k)}(s)$, $k \ge 1$ has the strong universality, namely, $\zeta^{(k)}(s)$ can approximate any analytic function (with zeros) in the sense of the universality (see for example \cite[p.~30]{St1877} for the definition of the strong universality). Therefore, we have the following. 
\begin{thm}\label{thm:c}
For any $\sigma_1$ and $\sigma_2$ with $1/2 < \sigma_1 < \sigma_2 < 1$, there exists a constant $C$ such that for sufficiently large $T$ one can find more than $CT$ zeros of the function $\zeta^{(k)}(s)$ with $k \in {\mathbb{N}}$ lying in $\sigma_1 < \Re (s) <\sigma_2$ and $0 < \Im (s) <T$. 
\end{thm}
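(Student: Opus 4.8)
The plan is to deduce Theorem \ref{thm:c} from the strong universality of $\zeta^{(k)}(s)$ established by Meyrath \cite[Theorem 2]{Mey}, combined with a Rouch\'e-type argument and a density count of well-separated approximating shifts. First I would fix a closed disk $K = \{ s : |s-s_0| \le r \} \subset D$ whose real parts lie strictly inside $(\sigma_1,\sigma_2)$, and take the target function $f(s) = s - s_0$, which is analytic on $K$ and vanishes (simply) only at the center $s_0$. Since $|f(s)| = r$ on $\partial K$, I would fix any $\varepsilon$ with $0 < \varepsilon < r$. The point is that strong universality, unlike the non-vanishing version in Theorem \ref{thm:Voronin}, permits approximation of functions \emph{having} zeros, so it yields
$$
\liminf_{T \to \infty} \nu_T \Bigl\{ \max_{s \in K} \bigl| \zeta^{(k)}(s + i\tau) - f(s) \bigr| < \varepsilon \Bigr\} =: d > 0 .
$$
For every $\tau$ in this set the inequality $\bigl|\zeta^{(k)}(s+i\tau) - f(s)\bigr| < \varepsilon < |f(s)|$ holds on $\partial K$, so Rouch\'e's theorem gives that $\zeta^{(k)}(s+i\tau)$ has exactly one zero in the interior of $K$. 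Equivalently, $\zeta^{(k)}$ has a zero $\rho_\tau$ with $\sigma_1 < \Re(\rho_\tau) < \sigma_2$ and $\Im(\rho_\tau) \in \bigl(\tau + \Im(s_0) - r,\, \tau + \Im(s_0) + r\bigr)$.

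Next I would convert this positive density into a lower bound for the number of \emph{distinct} zeros. For large $T$ the set $A_T$ of admissible $\tau \in [0,T]$ satisfies $\mu(A_T) \ge (d/2)T$. Partitioning $[0,T]$ into consecutive intervals of length $2r$, each contributes at most $2r$ to $\mu(A_T)$, so at least $\mu(A_T)/(2r) \ge dT/(4r)$ of them meet $A_T$. Choosing one admissible $\tau$ from every other such interval produces at least $dT/(8r)$ shifts that are pairwise more than $2r$ apart; the open windows $\bigl(\tau + \Im(s_0) - r,\, \tau + \Im(s_0) + r\bigr)$ are then disjoint, whence the zeros $\rho_\tau$ have pairwise distinct imaginary parts. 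This gives more than $CT$ distinct zeros of $\zeta^{(k)}$ in the rectangle $\sigma_1 < \Re(s) < \sigma_2$, $0 < \Im(s) < T$, with $C = d/(8r)$, as asserted.

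The only genuinely delicate step is the passage from ``positive density of good shifts'' to ``many distinct zeros'': a priori, different values of $\tau$ could produce the same zero, and this is exactly what the spacing argument rules out by forcing the admissible imaginary windows to be disjoint. Everything else is either Meyrath's strong universality or a routine application of Rouch\'e's theorem. I would close by remarking that the case $k=1$ recovers the result of Laurin\v cikas \cite{Lau85}, and that the same conclusion also follows as a special case of Corollary \ref{cor:1} applied to the hybridly universal function $\zeta(s)$ with the degree-one polynomial $P_s(X) = X$, since then $(d^k/ds^k)P_s(\zeta(s)) = \zeta^{(k)}(s)$.
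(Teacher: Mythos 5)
Your proposal is correct and follows essentially the same route as the paper: the paper derives Theorem \ref{thm:c} directly from Meyrath's strong universality of $\zeta^{(k)}(s)$, and the Rouch\'e argument plus the separation of admissible shifts that you spell out is precisely the standard (implicit) deduction the paper is invoking. Your closing observation that the statement is also a special case of Corollary \ref{cor:1} with $P_s(X)=X$ matches the paper's remark that Theorem \ref{th:1} generalizes Theorem \ref{thm:c}.
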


Furthermore, Launrin\v cikas \cite[p.~200]{Lau12} showed that $\sum_{k=1}^l a_k \zeta^{(k)}(s)$, where $a_k \in {\mathbb{C}} \setminus \{ 0\}$ has the strong universality (see also \cite[Theorem 3.1]{Lau12}). Hence $\sum_{k=1}^l a_k \zeta^{(k)}(s)$ with $a_k \in {\mathbb{C}} \setminus \{ 0\}$ has more than $CT$ zeros in $\sigma_1 < \Re (s) <\sigma_2$ and $0 < \Im (s) <T$. Obviously, Theorem \ref{th:1} is a generalization of this result and Theorem \ref{thm:c}. 

\section{Proofs}
\subsection{Preliminary}
We quote the first lemma from \cite[Lemma 6.6.1]{Lali} or \cite[p.~194]{St1877}. We can find the second lemma in \cite[Lemma 1]{NaPa1} and \cite[Lemma 2.1]{NaPaE}.
\begin{lemma}\label{lem:661}
Let $c_0 \ne 0, c_1, \ldots , c_m$ be complex numbers. Then there exists a polynomial $p_m (s) = b_0 + b_1s + \cdots +b_ms^m$ such that
$$
c_n = \frac{d^n}{ds^n} \Bigl[ \exp \bigl( p_m (s) \bigr) \Bigr]_{\!s=0\,} , \qquad 0 \le n \le m.
$$
\end{lemma}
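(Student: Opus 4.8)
The plan is to recast the derivative conditions as conditions on Taylor coefficients and then construct $p_m$ by a formal logarithm. Setting $g(s) := \exp(p_m(s))$, the requirement $c_n = g^{(n)}(0)$ for $0 \le n \le m$ is equivalent to prescribing the first $m+1$ Taylor coefficients of $g$ at $s=0$ to be $c_n/n!$. Introducing the target polynomial
$$
G(s) := \sum_{n=0}^m \frac{c_n}{n!}\, s^n ,
$$
the goal becomes: find a polynomial $p_m$ of degree at most $m$ with $\exp(p_m(s)) \equiv G(s) \pmod{s^{m+1}}$.

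Since $c_0 \ne 0$ we have $G(0)=c_0\ne 0$, so $G(s)/c_0 = 1 + (c_1/c_0)s + \cdots$ has constant term $1$, and the formal power series $h(s) := \log\bigl(G(s)/c_0\bigr)$ is well defined with $h(0)=0$, obtained by substituting $G(s)/c_0 - 1$ into $\log(1+u) = \sum_{k\ge1}(-1)^{k-1}u^k/k$. Fixing any branch value of $\log c_0$, I would take $p_m(s)$ to be the truncation of $\log c_0 + h(s)$ to degree $m$, so that $b_0 = \log c_0$ and $b_1,\dots,b_m$ are the coefficients of $h$ up through order $m$.

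To verify the construction, write $\log c_0 + h(s) = p_m(s) + r(s)$ with $r(s) = O(s^{m+1})$. Exponentiating the exact formal identity $\exp(\log c_0 + h(s)) = G(s)$ gives $G(s) = \exp(p_m(s))\exp(r(s))$, and since $\exp(r(s)) = 1 + O(s^{m+1})$ this yields $\exp(p_m(s)) \equiv G(s)\pmod{s^{m+1}}$. Hence the first $m+1$ Taylor coefficients of $\exp(p_m(s))$ agree with those of $G$, so $g^{(n)}(0) = n!\cdot (c_n/n!) = c_n$ for $0 \le n \le m$, as required.

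The only genuinely delicate point is the bookkeeping guaranteeing that exactly the first $m+1$ coefficients match, together with the role of the hypothesis $c_0 \ne 0$, which is what makes the logarithm exist. An equivalent and perhaps more self-contained route, which I would record as a remark, is to differentiate $g' = p_m'\,g$ by Leibniz's rule to obtain, for $n \ge 0$, the relation $c_{n+1} = \sum_{j=0}^{n}\binom{n}{j}(j+1)!\,b_{j+1}\,c_{n-j}$; here $b_{n+1}$ enters with coefficient $(n+1)!\,c_0 \ne 0$, so solving successively for $n = 0, 1, \ldots, m-1$ determines $b_1,\dots,b_m$ uniquely, while $b_0 = \log c_0$ handles the equation $c_0 = \exp(b_0)$. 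I would present the formal-logarithm argument as the main proof since it is cleaner and avoids the triangular-system computation.
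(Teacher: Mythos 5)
Your proof is correct. One thing to be aware of: the paper does not prove this lemma at all --- it is quoted without proof from \cite[Lemma 6.6.1]{Lali} (see also \cite[p.~194]{St1877}) --- so there is no in-paper argument to compare against; the standard proof in those references is essentially the triangular recursion you relegate to a closing remark, namely differentiating $g'=p_m'g$ and solving successively for $b_{n+1}$ using that its coefficient $(n+1)!\,c_0$ is nonzero. Your main argument, truncating the formal logarithm of $G(s)/c_0$ at degree $m$ and exponentiating back, is a genuinely different and somewhat more structural packaging of the same fact: it isolates exactly where $c_0\neq 0$ is used (existence of the formal logarithm) and shows that $b_1,\dots,b_m$ are uniquely determined once a branch of $\log c_0$ is fixed, whereas the recursion makes the degree bookkeeping automatic at the cost of an induction. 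The only step worth making explicit in your write-up is the identification of the formal exponential of the polynomial $p_m$ with the Taylor expansion at $0$ of the entire function $\exp(p_m(s))$; since $p_m(s)-b_0$ has zero constant term this is immediate, and with that remark the congruence $\exp(p_m(s))\equiv G(s)\pmod{s^{m+1}}$ does yield $g^{(n)}(0)=c_n$ for $0\le n\le m$ as you claim. Either of your two arguments would serve as a complete, self-contained proof of the lemma.
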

\begin{lemma}\label{lem:AlmostPer}
Let $u,v \in {\mathbb{N}}$ and $D_u(s) = \sum_{n=1}^\infty a_{u,n} e^{-\lambda_{u,n} s}$ be a general Dirichlet series for $1\le u \le v$. Then, for every $\varepsilon>0$ and every compact set $K$ lying in the half-plane of absolute convergence, there exist $\delta > 0$, $M \in {\mathbb{N}}$ and a finite set $I\in\{1,2,\ldots,v\} \times {\mathbb{N}}$ such that the numbers $\lambda_{j,k}$, where $(j,k) \in I$, are linearly independent over $\mathbb{Q}$ and, moreover, if 
$$
\max_{(j,k)\in I} \Bigl\Vert \frac{\tau\lambda_{j,k}}{2 \pi M} \Bigr\Vert < \delta
$$
then it holds that
$$
\max_{1\le u \le v} \max_{s \in K} \bigl|D_u(s+i\tau) - D_u(s)\bigr| < \varepsilon.
$$
\end{lemma}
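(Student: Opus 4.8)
The plan is to establish this almost-periodicity statement by a three-stage argument: truncate each Dirichlet series to a finite sum, reduce the finitely many surviving exponents to a $\mathbb{Q}$-linearly independent set that will serve as $I$, and then convert the diophantine hypothesis into a uniform estimate on $K$.

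First I would exploit absolute convergence on $K$. Since $K$ lies in the half-plane of absolute convergence and $|e^{-\lambda_{u,n}(s+i\tau)}| = |e^{-\lambda_{u,n}s}|$ is independent of $\tau$, the tail $\sum_{n>N}|a_{u,n}|\sup_{s\in K}|e^{-\lambda_{u,n}s}|$ can be made smaller than $\varepsilon/3$ for all $u$ simultaneously by choosing $N=N(\varepsilon,K)$ large. Both $D_u(s)$ and $D_u(s+i\tau)$ then differ from their respective $N$-th partial sums by at most $\varepsilon/3$ uniformly on $K$, so it suffices to control the finite difference $\sum_{n\le N} a_{u,n}e^{-\lambda_{u,n}s}(e^{-i\lambda_{u,n}\tau}-1)$.

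Next comes the heart of the argument. The finite family $\{\lambda_{u,n} : 1\le u\le v,\ 1\le n\le N\}$ spans a finite-dimensional $\mathbb{Q}$-vector space; I would choose a maximal $\mathbb{Q}$-linearly independent subfamily and let $I\subset\{1,\ldots,v\}\times\mathbb{N}$ be its index set, which furnishes exactly the exponents $\lambda_{j,k}$ claimed to be linearly independent over $\mathbb{Q}$. Writing each $\lambda_{u,n}$ (for $n\le N$) as a rational combination $\sum_{(j,k)\in I} q^{(u,n)}_{j,k}\lambda_{j,k}$ and letting $M\in\mathbb{N}$ be a common denominator of these finitely many rationals, one has $\lambda_{u,n}=M^{-1}\sum_{(j,k)\in I} m^{(u,n)}_{j,k}\lambda_{j,k}$ with integers $m^{(u,n)}_{j,k}$. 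The role of $M$ is precisely this denominator-clearing: if $\Vert\tau\lambda_{j,k}/(2\pi M)\Vert<\delta$ for every $(j,k)\in I$, then $\tau\lambda_{u,n}/(2\pi)=\sum_{(j,k)\in I} m^{(u,n)}_{j,k}\cdot\tau\lambda_{j,k}/(2\pi M)$ differs from an integer by at most $\delta\sum_{(j,k)\in I}|m^{(u,n)}_{j,k}|$, so $\Vert\tau\lambda_{u,n}/(2\pi)\Vert$ is small uniformly in $n\le N$.

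Finally I would convert this into the claimed uniform bound. Using $|e^{2\pi i x}-1|\le 2\pi\Vert x\Vert$, each factor $|e^{-i\lambda_{u,n}\tau}-1|$ is bounded by a constant multiple of $\delta$, the constant depending only on the finitely many integers $m^{(u,n)}_{j,k}$; multiplying by $\sum_{n\le N}|a_{u,n}|\sup_{s\in K}|e^{-\lambda_{u,n}s}|$ and taking $\delta$ small enough forces the finite difference below $\varepsilon/3$, whence $\max_{1\le u\le v}\max_{s\in K}|D_u(s+i\tau)-D_u(s)|<\varepsilon$. I expect the only genuinely delicate point to be the linear-algebra reduction in the middle stage: one must pass to a basis of the rational span and track a single common denominator $M$ so that a diophantine condition imposed solely on the independent exponents $\lambda_{j,k}$, $(j,k)\in I$, propagates to near-integrality of every $\tau\lambda_{u,n}/(2\pi)$ with $n\le N$. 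The truncation and the concluding continuity estimate are routine by comparison.
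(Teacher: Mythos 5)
Your proposal is correct and follows essentially the same route as the paper's proof: truncation via absolute convergence, passage to a $\mathbb{Q}$-basis of the span of the exponents $\lambda_{u,n}$ with $n\le N$ together with a common denominator $M$, and the estimate $|e^{-i\tau\lambda_{u,n}}-1|\ll\max_{(j,k)\in I}\Vert\tau\lambda_{j,k}/(2\pi M)\Vert$. If anything, you spell out the denominator-clearing step more explicitly than the paper does.
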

\begin{proof}
For the reader's convenience, we write the proof here. Take $\varepsilon > 0$ and fix a compact set $K$. Then there exists a positive integer $N$ such that
$$
\max_{1\le u \le v}\max_{s\in K} \sum_{n>N} |a_{u,n}| e^{-\lambda_{u,n} \Re (s)} < \varepsilon
$$
since the Dirichlet series $D_u(s)$ converges absolutely on the compact set $K$. 
Now take the set $I$ of indices such that $\{\lambda_{j,k} : (j,k)\in I\}$ is the basis of a vector space over $\mathbb{Q}$ generated by all numbers $\lambda_{u,n}$ with $n \le N$. Moreover, let $M$ be a positive integer such that all these numbers can be expressed as a linear combination of elements $\lambda_{j,k}/M$, where $(j,k)\in I$, with integer coefficients. Then, for $s\in K$ and $1\le u\le v$, one has
\begin{align*}
|P_u(s+i\tau)  - P_u(s)|& \le 
\sum_{n \le N} \left|a_{u,n}\right| \left|e^{-\lambda_{u,n} (s+i\tau)}-e^{-\lambda_{u,n} s} \right| 
+ 2\sum_{n>N} \left|a_{u,n}\right|  e^{-\lambda_{u,n} \Re (s)}\\
&\ll \max_{n\le N}\left|e^{-i\tau\lambda_{u,n}} - 1\right| + 2\varepsilon \ll 
\max_{(j,k)\in I} \biggl\Vert \frac{\tau\lambda_{j,k}}{2 \pi M} \biggr\Vert  + 2\varepsilon.
\end{align*}
By taking suitable $\delta > 0$, we complete the proof.
\end{proof}

\begin{lemma}\label{lem:1}
Let ${\mathcal{K}}$, ${\mathcal{K}}'$ and ${\mathcal{K}}''$ be a closed disks whose centers are $a \in {\mathbb{C}}$ and radiuses are $r$, $r'$, $r''$ which satisfy $0<r<r'<r''$. Suppose that $f(s)$ and $g(s)$ are analytic in the interior of ${\mathcal{K}}''$ and satisfy $\max_{s \in {\mathcal{K}}'} |f(s)-g(s)|< \varepsilon$. Then it holds that
\begin{equation}\label{eq:decau1}
\max_{s \in {\mathcal{K}}} \bigl|f^{(k)}(s)-g^{(k)}(s)\bigl| < \frac{k! 2^k \varepsilon}{(r'-r)^k} .
\end{equation}
\end{lemma}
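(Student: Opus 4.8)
The plan is to reduce the whole statement to a single application of Cauchy's integral formula for derivatives, applied to the difference $h := f - g$. Since $f$ and $g$ are both analytic in the interior of $\mathcal{K}''$ and $r' < r''$, the closed disk $\mathcal{K}'$ is contained in that interior, so $h$ is analytic on $\mathcal{K}'$; and by hypothesis $|h(s)| < \varepsilon$ for every $s \in \mathcal{K}'$. Thus the outermost disk $\mathcal{K}''$ is needed only to guarantee analyticity on the closed disk $\mathcal{K}'$, and plays no further role.

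First I would fix an arbitrary point $s_0 \in \mathcal{K}$, so that $|s_0 - a| \le r$, and set $\rho := (r'-r)/2$. Let $\gamma$ denote the positively oriented circle $|s - s_0| = \rho$. The key geometric observation is that every $s$ on $\gamma$ satisfies
\[
|s - a| \le |s - s_0| + |s_0 - a| \le \rho + r = \frac{r+r'}{2} < r',
\]
so $\gamma$, together with the disk it bounds, lies strictly inside $\mathcal{K}'$ and hence inside the region where $h$ is analytic. This is precisely the choice of radius that produces the factor $2^k$ in \eqref{eq:decau1}: taking $\rho = (r'-r)/2$ rather than $\rho = r'-r$ leaves the circle comfortably interior to $\mathcal{K}'$ at the modest cost of the constant $2^k$.

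Next I would write the Cauchy integral representation
\[
h^{(k)}(s_0) = \frac{k!}{2\pi i} \oint_{\gamma} \frac{h(s)}{(s - s_0)^{k+1}}\, ds
\]
and estimate it by the standard length-times-supremum bound. On $\gamma$ one has $|s-s_0| = \rho$ and $|h(s)| < \varepsilon$, while $\gamma$ has length $2\pi\rho$, so
\[
\bigl| h^{(k)}(s_0) \bigr| \le \frac{k!}{2\pi} \cdot \frac{\varepsilon}{\rho^{k+1}} \cdot 2\pi\rho = \frac{k!\, \varepsilon}{\rho^k} = \frac{k!\, 2^k \varepsilon}{(r'-r)^k}.
\]
Since $h^{(k)} = f^{(k)} - g^{(k)}$ and the bound is uniform in $s_0 \in \mathcal{K}$, taking the maximum over $\mathcal{K}$ yields \eqref{eq:decau1}.

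I do not expect a genuine obstacle here: the argument is a routine Cauchy estimate, and the only point demanding a little care is the radius bookkeeping in the second paragraph, namely verifying that the circle of radius $(r'-r)/2$ about any point of $\mathcal{K}$ stays inside $\mathcal{K}'$, which is exactly what legitimizes the supremum bound $|h| < \varepsilon$ on $\gamma$ and pins down the constant $k!\,2^k/(r'-r)^k$.
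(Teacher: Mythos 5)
Your proof is correct and follows essentially the same route as the paper: both fix a point of $\mathcal{K}$, take the circle of radius $(r'-r)/2$ about it (which lies inside $\mathcal{K}'$), and apply the Cauchy integral formula for the $k$-th derivative of $f-g$ with the standard length-times-supremum estimate, yielding the constant $k!\,2^k/(r'-r)^k$. The only cosmetic difference is that the paper writes out the parametrization $z=s+r_{\#}e^{i\theta}$ explicitly rather than quoting the $ML$-bound.
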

\begin{proof}
Put $r_\# := (r'-r)/2$. Note that $s+r_\#e^{i\theta} \in {\mathcal{K}}'$ for all $0\le \theta < 2\pi$ when $s \in {\mathcal{K}}$. From Cauchy's integral formula, we have
\begin{equation*}
\begin{split}
&\bigl|f^{(k)}(s)-g^{(k)}(s)\bigl| = 
\biggl|\frac{k!}{2\pi i} \int_{|z-s|=r_\#} \frac{f(z)-g(z)}{(z-s)^{k+1}} dz \biggr| \\ = & \,
\left|\frac{k!}{2\pi i} \int_0^{2\pi} 
\frac{f(s+r_\#e^{i\theta})-g(s+r_\#e^{i\theta})}{(r_\#e^{i\theta})^{k+1}} r_\# e^{i\theta} d\theta \right|
< \frac{k! \varepsilon}{2\pi} \int_0^{2\pi} \frac{d\theta}{r_\#^k},
\end{split}
\end{equation*}
for any $s \in {\mathcal{K}}$. This inequality implies (\ref{eq:decau1}). 
\end{proof}

\subsection{Proof of main results }
\begin{proof}[Proof of Theorem \ref{th:1}]
For any $\varepsilon >0$, any closed disk $K' \subset D$ and any function $f(s)$ which is non-vanishing and continuous on $K''$ and analytic in the interior of $K''$, where $K'' \supset K'$, there exists $\tau \in {\mathbb{R}}$ such that
$$
\max_{1\le u \le v} \max_{s \in K'} \bigl|D_u(s+i\tau) - D_u(s)\bigr| < \varepsilon \quad \mbox{and} \quad
\max_{s \in K'} \bigl|L(s+i\tau) - f(s)\bigr| < \varepsilon
$$
from Lemma \ref{lem:AlmostPer} and the definition of the hybrid universality. According to Lemma \ref{lem:1}, for $K \subset K'$, there exists $\tau \in {\mathbb{R}}$ such that
\begin{equation}\label{eq:1}
\max_{1\le u \le v} \max_{s \in K} \bigl|D_u(s+i\tau) - D_u(s)\bigr| < \varepsilon \quad \mbox{and} \quad
\max_{0 \le j \le l} \max_{s \in K} \bigl|L^{(j)}(s+i\tau) - f^{(j)}(s)\bigr| < \varepsilon.
\end{equation}

First suppose 
\begin{equation}\label{eq:mono}
P_s \bigl(L(s), L^{(1)}(s),\ldots, L^{(l)}(s)\bigr) = D(s) L^{(k)}(s), 
\end{equation}
where $1 \le k \le l$ and $D(s)$ is a general Dirichlet series converges absolutely when $\Re (s) >1/2$. Now let $1/2 < \sigma_1 < \sigma_2<1$, $K \subset \{ s \in {\mathbb{C}} : \sigma_1< \Re (s) < \sigma_2 \}$ be a closed disk whose center is $\alpha \in {\mathbb{C}}$ and $f(s) := se^{\eta s}$ with $\eta \ne 0$. Obviously, the function $se^{\eta s}$ is non-vanishing and continuous on $K$ and analytic in the interior of $K$. In addition, one has
$$
f^{(k)}(s) = \frac{d^k}{ds^k} se^{\eta s} = k \eta^{k-1} e^{\eta s} + \eta^k s e^{\eta s} =
\eta^{k-1} e^{\eta s} (k+ \eta s)
$$
by the principle of mathematical induction. Put $\eta = -k /\alpha$. Then the function 
\begin{equation}\label{eq:Am}
A(s,\alpha , k) := \frac{d^k}{ds^k} se^{-k s/\alpha} = 
(-1)^{k-1} k^{k-1} \alpha^{-k} e^{-k s/\alpha} (k\alpha-ks)
\end{equation}
has a only one zero at $s=\alpha$ when $s \in K$. From (\ref{eq:1}), we have
$$
\max_{|s-\alpha|=r} \bigl| D(s+i\tau) L^{(k)}(s+i\tau) - D(s) f^{(k)}(s) \bigr| < 
\min_{|s-\alpha|=r} \bigl|  D(s) f^{(k)}(s) \bigr|
$$
if $D(s)$ does not vanish on the circle $|s-\alpha|=r$. By applying Rouch\'e's theorem, we can show that whenever the inequality above holds, the function $D(s+i\tau) L^{(k)}(s+i\tau)$ has a zero in the interior of $K$. Moreover, the measure of such $\tau \in [0,T]$ is greater than $cT$ for some constant $c>0$ according to the hybrid universality. Hence, for some positive constant $C_1$, the function $D(s) L^{(k)}(s)$ has more than $C_1T$ nontrivial zeros in the rectangle $\sigma_1 < \sigma < \sigma_2$, $0 < t <T$ when $1 \le k \le l$. 

Assume that $P_s \in \mathcal{D}_s [X_0,X_1, \ldots,X_l]$ is a monomial such that at least one of the degree of $X_1,\ldots,X_l$ is greater than zero. Then a factor of $P_s(L(s), L^{(1)}(s),\ldots, L^{(l)}(s))$ is expressed as the form (\ref{eq:mono}). Therefore, the function $P_s(L(s), L^{(1)}(s),\ldots, L^{(l)}(s))$, where $P_s \in \mathcal{D}_s [X_0,X_1, \ldots,X_l]$ is a monomial such that at least one of the degree of $X_1,\ldots,X_l$ is greater than zero, has more than $C_1T$ nontrivial zeros in $\sigma_1 < \sigma < \sigma_2$, $0 < t <T$. \\

Next suppose $P_s \in \mathcal{D}_s [X_0,X_1, \ldots,X_l]$ is a non-monomial polynomial. Then we can see that at least one of the degree of $X_0,\ldots,X_l$ is greater than zero. It should be noted that we show the case that not at least one of the degree of $X_1,\ldots,X_l$ is greater than zero but at least one of the degree of $X_0,X_1,\ldots,X_l$ is greater than zero since the latter case needs less assumptions and contains \cite[Main Theorem 1]{napa4} or Theorem \ref{th:mthm1}. In this case, the function $P_s(L(s), L^{(1)}(s),\ldots, L^{(l)}(s))$ can be written by
\begin{equation}\label{eq:2}
\begin{split}
&P_s \bigl(L(s), L^{(1)}(s),\ldots, L^{(l)}(s)\bigr) = \\ &
\sum_{0 \le d_j \le n_j, \, 0 \le j \le l} D_{d_0d_1\ldots d_l} (s) 
\bigl( L^{(0)}(s) \bigr)^{d_0} \bigl( L^{(1)}(s) \bigr)^{d_1} \cdots \bigl( L^{(l)}(s) \bigr)^{d_l}
\end{split}
\end{equation}
where $D_{d_0d_1\ldots d_l} (s)$ are general Dirichlet series converge absolutely when $\Re (s) >1/2$ and at least two of $D_{d_0d_1\ldots d_l} (s)$ with $0 \le d_j \le n_j$ for $0 \le j \le l$ are not identically vanishing. Let $K \subset \{ s \in {\mathbb{C}} : 1/2 < \sigma_1< \Re (s) < \sigma_2<1 \}$ be a closed disk whose center is $\alpha \in {\mathbb{C}}$ and let at least two of $D_{d_0d_1\ldots d_l} (\alpha)$ with $0 \le d_j \le n_j$ for $0 \le j \le l$ be not zero. This is possible from the assumption that $P_s \in \mathcal{D}_s [X_0,X_1, \ldots,X_l]$ is a non-monomial polynomial. Then we can choose non-zero complex numbers $\theta_0, \theta_1, \ldots, \theta_l$ which satisfy 
\begin{equation}\label{eq:zerotheta}
0= \sum_{0 \le d_j \le n_j, \, 0 \le j \le l} D_{d_0d_1\ldots d_l} (\alpha) \,
\theta_0^{d_0} \theta_1^{d_1} \cdots \theta_l^{d_l} .
\end{equation}
For $0 \le j \le l$, we can take $\eta_0,\eta_1, \ldots, \eta_l \in {\mathbb{C}}$ satisfying
$$
\theta_j = \frac{d^j}{ds^j} \Bigl[ f(s) \Bigr]_{\!s=\alpha\,} ,
\qquad f(s) := \exp \bigl( \eta_0 + \eta_1 (s - \alpha) + \cdots + \eta_l (s-\alpha)^l \bigr)
$$
from Lemma \ref{lem:661}. Hence $A(s, \alpha \,; f)$ written by
\begin{equation}\label{eq:Ap}
\begin{split}
&A(s, \alpha \,; f) := \sum_{0 \le d_j \le n_j, \, 0 \le j \le l} D_{d_0d_1\ldots d_l} (s) 
\bigl( f^{(0)}(s) \bigr)^{d_0} \bigl( f^{(1)}(s) \bigr)^{d_1} \cdots \bigl( f^{(l)}(s) \bigr)^{d_l} 
\end{split}
\end{equation} 
has a zero at the point $s=\alpha$ from (\ref{eq:zerotheta}) and the definition of $f(s)$. For simplicity, put
\begin{equation}\label{eq:Z}
Z(s+i\tau) := P_{s+i\tau} \bigl(L(s+i\tau), L^{(1)}(s+i\tau),\ldots, L^{(l)}(s+i\tau)\bigr) .
\end{equation}
By using (\ref{eq:1}), we have
\begin{equation*}
\begin{split}
&\max_{|s-\alpha|=r} \Biggl| Z(s+i\tau) - 
\sum_{0 \le d_j \le n_j, \, 0 \le j \le l} D_{d_0d_1\ldots d_l} (s) 
\bigl( f^{(0)}(s) \bigr)^{d_0} \bigl( f^{(1)}(s) \bigr)^{n_1} \cdots \bigl( f^{(l)}(s) \bigr)^{d_l} \Biggr| \\ & < 
\min_{|s-\alpha|=r} \Biggl| \sum_{0 \le d_j \le n_j, \, 0 \le j \le l} D_{d_0d_1\ldots d_l} (s) 
\bigl( f^{(0)}(s) \bigr)^{d_0} \bigl( f^{(1)}(s) \bigr)^{d_1} \cdots \bigl( f^{(l)}(s) \bigr)^{d_l} \Biggr|
\end{split}
\end{equation*}
when the function $A(s, \alpha \,; f)$ does not vanish on the circle $|s-\alpha|=r$. Therefore, the function $Z(s+i\tau)$ defined by (\ref{eq:Z}) also has a zero in the interior of $K$ by Rouch\'e's theorem. Thus $P_s(L(s), L^{(1)}(s),\ldots, L^{(l)}(s))$ has more than $C_1T$ nontrivial zeros in the rectangle $\sigma_1 < \sigma < \sigma_2$, $0 < t <T$ by the hybrid universality when $P_s \in \mathcal{D}_s [X_0, X_1, \ldots, X_l]$ is a non-monomial polynomial.
\end{proof}

\begin{proof}[Proof of Proposition \ref{pro:1}]
The function $P_s(L(s), L^{(1)}(s),\ldots, L^{(l)}(s))$ can be continued analytically to a meromorphic function on $\Re (s)>1/2$ with a finite number of poles and all of them lie on the line $\Re (s)=1$ according to (\ref{eq:2}) and the assumptions of $L^{(j)}(s)$. Moreover, $P_s(L(s), L^{(1)}(s),\ldots, L^{(l)}(s))$ is a function of finite order from (\ref{eq:2}) and the assumption that $L^{(j)}(s)$ are function of finite order for all $0 \le j \le l$. 

Put $n := (l+1)^2 n_0n_1\cdots n_l$. Due to H\"older's inequality, one has
\begin{equation*}
\begin{split}
&\int_0^T \Bigl| D_{d_0d_1\ldots d_l}(\sigma+it) 
\bigl( L^{(0)}(\sigma+it) \bigr)^{d_0}  \cdots \bigl( L^{(l)}(\sigma+it) \bigr)^{d_l} \Bigr|^{2/n} dt \\ &
\ll \int_0^T \bigl|  L^{(0)}(\sigma+it) \cdots L^{(l)}(\sigma+it) \bigr|^{2/(l+1)} dt \\
&\le \biggl( \int_0^T \bigl|  L^{(0)}(\sigma+it) \bigr|^{2} dt \biggr)^{1/(l+1)} \cdots
\biggl( \int_0^T \bigl|  L^{(l)}(\sigma+it) \bigr|^{2} dt \biggr)^{1/(l+1)}  .
\end{split}
\end{equation*}
Hence, by the assumption (\ref{eq:1pro1}), it holds that
$$
\sum_{0 \le d_j \le n_j, \, 0 \le j \le l} \int_0^T \Bigl| D_{d_0d_1\ldots d_l}(\sigma+it) 
\bigl( L^{(0)}(\sigma+it) \bigr)^{d_0} \cdots \bigl( L^{(l)}(\sigma+it) \bigr)^{d_l} \Bigr|^{2/n} dt \ll T.
$$
Therefore, we obtain Proposition \ref{pro:1} by modifying the proof of \cite[Main Theorem 2]{napa4} (see also the proofs of \cite[Theorems 7.1 and 7.6]{St1877}).
\end{proof}

\begin{proof}[Proof of Corollaries \ref{cor:1} and \ref{cor:2}]
Let $D(s)$ be a general Dirichlet series converges absolutely in the half-plane $\Re (s) >1/2$. Then it is well-known that the general Dirichlet series of $D^{(k)}(s)$ also converges absolutely when $\Re (s) >1/2$ for any $1 \le k \le l$ (see for instance \cite[Theorem 4]{HaRi}). Hence $(d^k / ds^k) P_s(L(s))$ can be expressed as (\ref{eq:2}). Thus Corollaries \ref{cor:1} and \ref{cor:2} are deduced by Theorem \ref{th:1} and Proposition \ref{pro:1}, respectively.
\end{proof}

\section{Remarks and Examples}
\subsection{Remarks}
We have the following remarks.

\subsubsection*{(1)} In general, polynomials $P_s (L(s))$ are not universal even if $L(s)$ is hybridly universal. As an example, consider the non-universality of the following function
$$
F(s) = \bigl( 1-9^{9(s-3/4)} \bigr) \zeta (s) +2
$$
in the close disk ${\mathcal{K}}$ whose center is $3/4$ and the radius is $\pi / \log 9^9 < r <1/4$. Note that $\pi / \log (9^9) = 0.158867... < 1/4$. Then every shifted function $F(s+i\tau)-2$ has at least one zero inside ${\mathcal{K}}$, and hence it can not approximate uniformly functions which do not vanish inside ${\mathcal{K}}$. Especially, the function $F(s+i\tau)-2$ can not approximate the constant function $1$ (we can find this argument in \cite[p.~114]{Kazero}). 
Therefore, the function $F(s)$ can not approximate the constant function $3$ in the sense of universality. Despite of the fact above, we can show that $F(s)$ has zeros in the strip $1/2 < \sigma <1$ by Theorem \ref{th:1}.

\subsubsection*{(2)} In \cite[Theorem 4]{Gonek2}, Gonek, Lester and Milinovich proved that a positive proportion of the $a$-points of $\zeta (s)$ are simple in fixed strips to the right of the line $\Re(s)= 1/2$. As a generalization of this result, we consider simple zeros of $P_s(L(s), L^{(1)}(s),\ldots, L^{(l)}(s))$. Suppose that $A(s,\alpha, k)$ or $A(s,\alpha \,; f)$, defined by (\ref{eq:Am}) or (\ref{eq:Ap}) respectively, has only a simple zero in the disk $K$ whose center is $\alpha$ and the radius is $r$. Then $Z(s+i\tau)$ written by (\ref{eq:Z}) also has a simple zero in the disk $K$ from Rouch\'e's theorem. Hence there is a positive constant $c_1$ such that at least $c_1T$ of zeros of $P_s(L(s), L^{(1)}(s),\ldots, L^{(l)}(s))$ in the region $(\sigma_1, \sigma_2) \times (0, T)$ are simple in this case. It should be mentioned that $A(s,\alpha \,; f)$ does not have simple zeros when $P_s(L(s), L^{(1)}(s),\ldots, L^{(l)}(s))$ can be expressed as $(\zeta(s) -1)^2$, $(\zeta^2 (s)-\zeta (2s))^3$, $(\zeta'(s) + \zeta''(s) + \zeta(s+1))^5$ and so on. 

\subsubsection*{(3)} The Riemann zeta function $\zeta (s)$ satisfies all assumptions in Proposition \ref{pro:1}. This is proved as follows. By the following partial summation
$$
\zeta (s) = \sum_{n\le N} \frac{1}{n^s} + \frac{N^{1-s}}{s-1} +s \int_N^{\infty} \frac{[u]-u}{u^{s+1}} du,
$$
where $[u]$ denotes the maximal integer less than or equal to $u$ (see for example \cite[(1.3)]{St1877}), we can see that $\zeta^{(j)}(s)$ is a function of finite order and for any $0 \le j \le l$ and any fixed $1/2< \Re (s)<1$. Moreover, it is known that if $\eta$ and $\theta$ are fixed and $\eta,\theta >-1/2$, $\eta+\theta>1$, then it holds that
$$
\int_1^\infty \zeta^{(u)} (\eta+it) \zeta^{(v)} (\theta-it) dt \sim \zeta^{(u+v)} (\eta+\theta)T,
$$
where $u$ and $v$ are non-negative integers (see for example \cite[(2)]{Ing}). Therefore, the derivatives of the Riemann zeta function $\zeta^{(j)}(s)$ fulfill (\ref{eq:1pro1}) for all $0 \le j \le l$. 

\subsubsection*{(4)} 
There exist some functions composed by $\zeta (s)$ do not vanish in the strip $D$. For example, the function
$$
\exp \bigl( \zeta (s) \bigr) =\sum_{n=0}^\infty \frac{\zeta^n(s)}{n!} = 
1+\zeta (s) + \frac{\zeta^2(s)}{2!} + \frac{\zeta^3(s)}{3!} + \cdots
$$
does not have zeros in the vertical strip $D$. Hence we can not remove the assumption that $P_s$ is a polynomial (with finite degree) in Theorem \ref{th:1}. Furthermore, we introduce functions $G(s), F_\pm (s) \in {\mathcal{D}}_s [\zeta(s),s]$ do not vanish when $s \in D$ in Examples (4) and (5). 

\subsection{Examples}
We have the following examples which we can apply the main results.

\subsubsection*{(1)} According to Corollaries \ref{cor:1} and \ref{cor:2}, the derivatives of zeta functions appeared in \cite[Section 3]{napa4}, for example, the zeta functions associated to symmetric matrices treated by Ibukiyama and Saito, certain spectral zeta functions and the Euler-Zagier multiple zeta functions, have more than $C_1T$ and less than $C_2T$ nontrivial zeros in some rectangle since these zeta functions can be expressed by the form (\ref{eq:2}). 

\subsubsection*{(2)} It is known that the second derivative of the logarithm of the Riemann zeta function $(\log \zeta (s))''=(\zeta' (s) / \zeta(s))'= (\zeta (s))^{-2}(\zeta (s)\zeta''(s)-\zeta'(s)^2)$ appears in the pair correlation for the Riemann zeta function (see for example Bogomolny and Keating \cite{BK} and Rodgers \cite{Rod}). Recently, Stopple \cite[Theorem 3]{Sto} showed that for positive $\delta$ the number of zeros of 
$$\mu (s):=\zeta (s)\zeta''(s)-\zeta'(s)^2$$ 
in the region $|\Im (s)|\le T$, $\Re (s) >5/6+\delta$ is less than $C(\delta)T$, where $C(\delta) >0$. By using Theorem \ref{th:1}, Proposition \ref{pro:1} and the fact mentioned in Remark (3), we can prove that the number of zeros of $\mu (s)$ in the region $|\Im (s)|\le T$, $\Re (s) >5/6$ is more than $C_1T$ and less than $C_2T$, where $C_2\ge C_1>0$. 

\subsubsection*{(3)} 
Let $m \in {\mathbb{N}}$ and $H(P)$, where $P \in {\mathbb{P}}^m ({\mathbb{Q}})$, be the height of the projective space ${\mathbb{P}}^m ({\mathbb{Q}})$. Then the height zeta function ${\mathcal{Z}}_m (s)$ is expressed as
$$
{\mathcal{Z}}_m (s) := \sum_{P \in {\mathbb{P}}^m ({\mathbb{Q}})} H(P)^{-s} = \frac{1}{\zeta (s)}
\sum_{n=0}^{[m/2]} \binom{m+1}{2n+1} 2^{m-2n} \zeta (s-m+2n)
$$
(see for example \cite[Section 3.4.3]{TB}). It is well-known that for $\Re (s) >1$, one has $(\zeta(s))^{-1} = \sum_{n=1}^\infty \mu (n) n^{-s}$. Note that the M\"obius function $\mu (n)$ is defined by $\mu(n)=0$ if $n$ is divisible by $p^2$ for some prime number $p$, and otherwise $\mu (n) = (-1)^{\omega (n)}$, where $\omega (n)$ is the number of distinct prime factors of $n$ (see for instance \cite[p.~156]{Cohen}). Therefore, the derivatives of zeta function ${\mathcal{Z}}_m (s)$ have more than $C_1T$ and less than $C_2T$ nontrivial zeros in the rectangle $\sigma_1 < \sigma < \sigma_2$, $0 < t <T$, where $m-1/2 <\sigma_1 < \sigma_2 <m$ when $m \ge 2$ from Corollaries \ref{cor:1} and \ref{cor:2}.

We can find some other zeta functions which can be written as (non-monomial) polynomials in  \cite[D-16, D-19, D-64, D-70 and D-71]{GS}.

\subsubsection*{(4)} In \cite[Theorem 1.3]{Napr}, it is proved that the function 
$$
G(s) := \zeta(s) + Cs \in {\mathcal{D}}_s [\zeta(s),s]
$$
does not vanish in the half-plane $\sigma >1/18$ when $|C| > 10$ and $-19/2 \le \Re (C) \le 17/2$. By Corollaries \ref{cor:1} and \ref{cor:2}, the derivatives of the function $(d^k/ds^k) (\zeta(s) + Cs)$ with $k \in {\mathbb{N}}$ has more than $C_1T$ and less than $C_2T$ complex zeros in the rectangle $1/2<\sigma_1 < \sigma < \sigma_2<1$, $0 < t <T$ for some positive constants $C_1  \le C_2$. 

\subsubsection*{(5)} All complex zeros of the function
$$
F_{\pm} (s) := s-1 \pm 2\pi \frac{\zeta (s-1)}{\zeta (s+1)} = 
\frac{\Gamma (s)}{\Gamma (s-1)} \pm 2\pi \frac{\zeta (s-1)}{\zeta (s+1)} \in {\mathcal{D}}_s [\zeta(s),s]
$$
are on the critical line $\Re (s) =1/2$ (see \cite[p.~262]{Cohen}). Hence the functions $F_{\pm} (s)$ satisfy an analogue of the Riemann hypothesis. Note that $1/\Gamma (s)$ is analytic on the whole complex plane and recall that $(\zeta(s))^{-1} = \sum_{n=1}^\infty \mu (n) n^{-s}$, where $\mu (n)$ is the M\"obius function. Therefore, the functions $(d^k/ds^k) F_{\pm} (s)$ with $k \in {\mathbb{N}}$ has more than $C_1T$ and less than $C_2T$ complex zeros in the rectangle $3/2<\sigma_3 < \sigma < \sigma_4<2$, $0 < t <T$.

\subsection*{Acknowledgments}
The author was partially supported by JSPS grant 24740029. 

 
\end{document}